\newtheorem{thm}{Theorem}[section]
\newtheorem*{thm2}{Theorem}
\newtheorem{prop}[thm]{Proposition}
\newtheorem{cor}[thm]{Corollary}
\newtheorem{lem}[thm]{Lemma}
\theoremstyle{definition}
\newtheorem{defi}[thm]{Definition}
\newtheorem{rmk}[thm]{Remark}
\newcommand{\bb}[1]{\mathbb{#1}}
\newcommand{\cl}[1]{{\mathcal{#1}}}
\newcommand{\mfr}[1]{{\mathfrak{#1}}}
\newcommand{\mrm}[1]{{\mathrm{#1}}}
\newcommand{\perf}{{\mathrm{perf}}}
\newcommand{\red}{{\mathrm{red}}}
\newcommand{\Frob}{{\mathrm{Frob}}}
\newcommand{\Spec}{{\mathrm{Spec}}}
\newcommand{\Spf}{{\mathrm{Spf}}}
\newcommand{\Sppf}{{\mathrm{Sppf}}}
\newcommand{\GR}{\mathrm{Gr}}
\newcommand{\Lie}{\mathrm{Lie}}
\newcommand{\deff}{{\mathrm{def}}}
\newcommand{\ov}[1]{{\overline{#1}}}
\newcommand{\wtd}[1]{{\widetilde{#1}}}
\newcommand{\invp}{{[\tfrac{1}{p}]}}
\newcommand{\invpp}{{[1/p]}}
\newcommand{\cp}[1]{{{#1}^{\wedge}}}
\newcommand{\hsp}[1]{\hspace{#1}}
\title{Equidimensionality of affine Deligne-Lusztig varieties in mixed characteristic}
\author{Yuta Takaya}
\address{Graduate School of Mathematical Sciences, The University of Tokyo, 3-8-1 Komaba,
Meguro-ku, Tokyo 153-8914, Japan}
\email{takaya@ms.u-tokyo.ac.jp}
\begin{document}
\begin{abstract}
We prove the equidimensionality of affine Deligne-Lusztig varieties in mixed characteristic.
This verifies a conjecture made by Rapoport and implies that the results of Nie and Zhou-Zhu can be extended to the whole irreducible components of affine Deligne-Lusztig varieties. 
The method is to translate the work of Hartl-Viehmann into mixed characteristic and construct local foliations for affine Deligne-Lusztig varieties. 
This leads us to develop a theory of formal algebraic geometry for perfect schemes. 
\end{abstract}

\maketitle
\tableofcontents

\section*{Introduction}

In \cite{Rap05}, affine Deligne-Lusztig varieties were introduced to describe the underlying spaces of moduli spaces of $p$-divisible groups or local shtukas with level structures. 
The geometry of affine Deligne-Lusztig varieties depends on level structures. 
Let $G$ be a connected reductive group over a non-archimedean local field $F$. 
Let $O$ be the ring of integers of $F$. 
In this paper, we fix a reductive model $\cl{G}$ of $G$ over $O$ and study affine Deligne-Lusztig varieties at the hyperspecial level $\cl{G}$. 

Let $L$ be the completion of the maximal unramified extension of $F$ and fix a Borel pair $T\subset B\subset G$. Let $O_L$ be the ring of integers of $L$ and let $\sigma$ be the Frobenius automorphism of $L$ relative to $F$. 
Let $k$ be the residue field of $O$ and $\ov{k}$ be that of $O_L$. 
Associated with an element $b\in G(L)$ and a dominant cocharacter $\mu\in X_*(T)_+$, the affine Deligne-Lusztig variety $X_\mu(b)$ is defined to be a $\ov{k}$-scheme with the following set of closed points:
\[X_\mu(b)(\ov{k})=\Bigl\{g\in G(L)/\cl{G}(O_L) \Bigm\vert g^{-1}b\sigma(g) \in \cl{G}(O_L) \pi^{\mu} \cl{G}(O_L)\Bigr \}.\]
The affine Deligne-Lusztig variety $X_\mu(b)$ is locally of finite type over $\ov{k}$ in equal characteristic, and locally perfectly of finite type in mixed characteristic. 
Its geometric properties have been studied from various perspectives. 
For example, its non-emptiness criterion was proved in \cite{RR96} and \cite{Gas10} and its dimension formula was derived in \cite{GHKR06}, \cite{Vie06}, \cite{Ham15} and \cite{Zhu17}.
Recently, a representation-theoretic description of the $J_b(F)$-orbits of top-dimensional irreducible components of $X_\mu(b)$ was obtained. 
It was first conjectured by Chen and Zhu and proved in \cite{ZZ20} and \cite{Nie22}.
A bijection between the $J_b(F)$-orbits and Mirkovi\'{c}-Vilonen cycles was constructed in \cite{Nie22}. 
The stabilizer of each top-dimensional irreducible component was also studied in \cite{HZZ21}. 

Though these previous works only concern top-dimensional irreducible components,
it has been expected that the affine Deligne-Lusztig variety $X_\mu(b)$ is actually equidimensional. 
In the literature, this expectation first appeared in \cite[Conjecture 5.10]{Rap05}. 
It is based on the analogous property of Newton strata of the reduction of Shimura varieties discovered in \cite{Oor01}. 
This problem was recently addressed again in \cite[\S 3]{HV18} and \cite[Remark 0.1]{Nie22}. 
 
In equal characteristic, the equidimensionality of $X_\mu(b)$ was already proved by Hartl and Viehmann. 
The basic case was first treated in \cite{HV11} and the split case was carried out in their subsequent paper \cite{HV12}. 
Though not fully completed in the literature, the argument of the latter paper works in general as stated in \cite[Theorem 2.9]{VW18}. 
However, $X_\mu(b)$ was not given a perfect scheme structure before the pioneering work of \cite{Zhu17} and \cite{BS17} in mixed characteristic, and its equidimensionality remains open. 
When $X_\mu(b)$ can be identified with the perfection of the underlying space of a Rapoport-Zink space, it can be studied concretely via $p$-divisible groups and its equidimensionality can be proven in some cases as in \cite[Theorem 3.4]{HV18}. 
However, this approach can be applied only when $\mu$ is minuscule and the pair $(G,\mu)$ is reasonable, e.g. of local Hodge type. 

In this paper, we prove equidimensionality in mixed characteristic in full generality. 

\begin{thm2}\textup{(\Cref{thm:mainthm})}
The closed affine Deligne-Lusztig variety $X_{\leq \mu}(b)$ is equidimensional. 
\end{thm2}

This is the exact counterpart of \cite[Corollary 6.8 (a)]{HV12} in mixed characteristic. Here, we note that $X_{\leq \mu}(b)$ is a perfect scheme locally perfectly of finite type over $\ov{k}$ and $X_{\mu}(b)$ is an open subscheme of $X_{\leq \mu}(b)$. In particular, the above theorem implies that $X_{\mu}(b)$ is also equidimensional. Moreover, combining with the dimension formula of $X_\mu(b)$, we see that $X_\mu(b)$ is dense in $X_{\leq\mu}(b)$. Thus, the description of the $J_b(F)$-orbits of top-dimensional irreducible components of $X_{\mu}(b)$ can be enhanced to that of irreducible components of $X_{\leq \mu}(b)$. 

Our proof is based on the method in \cite{HV12}, so let us review their proof in short. 
Take a closed point $[g]$ of $X_{\leq \mu}(b)$ and fix a representative $g\in G(L)$. 
Set $b'=g^{-1}b\sigma(g)$ and $\mu^*=w_0(-\mu)$ with $w_0$ the longest element of the Weyl group. 
Over the completion of the affine Schubert variety $\mrm{Gr}_{\cl{G},\leq \mu^*}$ at $[b'^{-1}]$, we can construct the universal deformation of the local $\cl{G}$-shtuka $(\cl{G},b'\sigma)$. 
The closed Newton stratum $N^b$ of the universal deformation space is, up to a finite surjective map, decomposed into the product of the completion $X_{\leq \mu}(b)^{\wedge}_{[g]}$ of $X_{\leq \mu}(b)$ at $[g]$ and a central leaf $I^\wedge_{x,n}$. 
This is a local analogue of Oort's foliations for $p$-divisible groups and abelian varieties constructed in \cite{Oor04}. 
This decomposition allows us to relate the dimension of $X_{\leq \mu}(b)$ at $[g]$ to the dimension of $N^b$, which can be estimated through the purity of the Newton stratification. 

Our strategy is to work out this method with $F$-crystals with $\cl{G}$-structure in the category of perfect $\ov{k}$-schemes. This leads us to develop a theory of formal algebraic geometry for perfect schemes. Roughly speaking, when we define the completion of a perfect scheme $X$ along a closed perfect subscheme $Z$, we take a deperfection $Z_0\hookrightarrow X_0$ of $Z\hookrightarrow X$ and regard the perfection of the completion $(X_0)^\wedge_{Z_0}$ as the completion of $X$ along $Z$. We construct a deformation of an $F$-crystal with $\cl{G}$-structure $(\cl{G},b'\sigma)$ and take the closed Newton stratum $N^b$ of the deformation space. Then, we construct the following correspondence between $N^b$ and $X_{\leq \mu}(b)^{\wedge}_{[g]}\mathrel{\widehat{\times}} I^\wedge_{x,n}$. 
\begin{center}
    \[
    \begin{tikzcd}
        &(\wtd{N}^b)^{\wedge}\arrow[ld]\arrow[rd]& \\
        X_{\leq \mu}(b)^\wedge_{[g]}\mathrel{\widehat{\times}} I_{x,n}^\wedge && N^b
    \end{tikzcd}
    \]
\end{center}
Here, $\wtd{N}^b\to N^b$ is the perfection of a finite surjective map and $(\wtd{N}^b)^{\wedge}$ is the completion of $\wtd{N}^b$. This correspondence can be regarded as a local foliation in our setting and constructed in a similar way as \cite{HV12}. 
The reason why we only have a correspondence is the lack of the universality of the deformation of $(\cl{G},b'\sigma)$. 
The key property of this correspondence is that though the map $(\wtd{N}^b)^\wedge \to X_{\leq \mu}(b)^{\wedge}_{[g]} \mathrel{\widehat{\times}}I^\wedge_{x,n}$ may not come from a finite surjective map, we can still show that it is adic. This property enables us to estimate the dimension of $X_{\leq \mu}(b)$ at $[g]$. 

This paper is organized as follows. 
In \Cref{sec:1}, we define the completion of perfect schemes locally perfectly of finite type over a perfect field and develop a dimension theory needed to handle the dimension of $(\wtd{N}^b)^{\wedge}$. 
In \Cref{sec:2}, we introduce the notion of continuity of sections of loop groups, positive loop groups and affine Grassmannians. We make use of this result to define $N^b$. 
In \Cref{sec:3}, we recall the notion of fundamental alcoves that is a key technical ingredient in \cite{HV12}. 
In \Cref{sec:4}, we construct a local foliation as a correspondence and use it to deduce the equidimensionality of $X_{\leq \mu}(b)$. 

\section*{Acknowledgements}
I would like to thank my advisor Yoichi Mieda for his constant support and encouragement. I am also grateful to Ryosuke Shimada for introducing this topic to me. This work was supported by the WINGS-FMSP program at the Graduate School of Mathematical Sciences, the University of Tokyo.

\section*{Notation}
We fix a prime number $p$. All rings are assumed to be commutative. 
The Frobenius of a ring of characteristic $p$ is denoted by $\Frob$. 
The perfection of a ring $A$ of characteristic $p$ is denoted by $A^\perf$. 
An adic ring is a topological ring equipped with a linear topology of a finitely generated ideal. 
In our terminology, adic rings are not necessarily Noetherian nor complete, and the completion of an adic ring $A$ is denoted by $A^\wedge$. 
Similarly, the completion of a ring $A$ with respect to the $I$-adic topology is denoted by $A^\wedge_I$. 

\section{Formal algebraic geometry for perfect schemes}\label{sec:1}

The aim of this section is to introduce the spectra of perfect adic rings perfectly formally of finite type over a perfect field $k$ and define the completion of perfect schemes locally of finite type over $k$. 
Moreover, we develop a dimension theory for local adic rings. We will use it to measure the dimension of the completion of the perfection of local Noetherian rings.  

\subsection{The spectra of perfect adic rings}\label{ssec:specpa}

\begin{defi}
    Let $A$ be an adic ring of characteristic $p$ with an ideal of definition $I$. 
    We call the perfection $A^\perf$ equipped with the $I$-adic topology the perfection of $A$. 
\end{defi}

\begin{defi}
    An adic ring is called a perfect adic ring if it is isomorphic to the perfection of a complete adic ring. 
    Such a complete adic ring is called a deperfection. 
\end{defi}

Note that we distinguish perfect adic rings and adic perfect rings in our terminology. Adic perfect rings are adic rings of characteristic $p$ that are perfect as rings of characteristic $p$. 

Perfect adic rings play the role of affine building blocks in the formal algebraic geometry of perfect schemes. 
We will introduce their spectra as the perfection of the formal spectra of their deperfections. Here, the perfection is taken in the following sense. 

\begin{defi}
    Let $A$ be a complete adic ring of characteristic $p$ with an ideal of definition $I$. 
    Let $\cl{O}^\perf_{\Spf(A)}$ be the perfection of the structure sheaf on $\Spf(A)$ endowed with the $I$-adic topology on each affine open subspace. 
    We call the topologically ringed space $(\lvert \Spf(A) \rvert, \cl{O}^\perf_{\Spf(A)})$ the perfection of $\Spf(A)$. 
\end{defi}

The fact that $\cl{O}^\perf_{\Spf(A)}$ is a sheaf of topological rings follows from the following lemma. 

\begin{lem}
    Let $A$ be a complete adic ring of characteristic $p$ with an ideal of definition $I$. 
    Let $B$ be an $I$-completely faithfully flat complete adic $A$-algebra. 
    The perfection induces a topological embedding $A^\perf\hookrightarrow B^\perf$. 
\end{lem}
\begin{proof}
    It is enough to see that the map $A^\perf/I\cdot A^\perf\to B^\perf/I\cdot B^\perf$ is injective. 
    It is a colimit of $A/\Frob^N(I)\cdot A\to B/\Frob^N(I)\cdot B$ over every positive integer $N$. 
    If we assume that $I$ is finitely generated, we see that $\Frob^N(I)\cdot A$ is an ideal of definition of $A$. 
    Thus, $A/\Frob^N(I)\cdot A\to B/\Frob^N(I)\cdot B$ is injective since $B$ is $I$-completely faithfully flat. 
\end{proof}

There is subtle dependency on a choice of a deperfection when we define a spectrum of a perfect adic ring. 

\begin{lem}\label{lem:indep1}
    Let $A$ and $B$ be reduced complete adic rings of characteristic $p$. 
    Let $f\colon A^\perf\to B^\perf$ be an isomorphism of their perfection. 
    If we have $f(A)\subset \Frob^{-N}(B)$ and $f^{-1}(B)\subset \Frob^{-N}(A)$ for some positive integer $N$, $f$ induces an isomorphism of the perfection of $\Spf(A)$ and that of $\Spf(B)$. 
\end{lem}
\begin{proof}
    It is easy to see that $f$ induces a homeomorphism $\lvert \Spf(A) \rvert \cong \lvert \Spf(B) \rvert $, so it is enough to check on affine open subsets. 
    Take an element $a\in A$ such that $f(a)\in B$. 
    It is enough to show that $f$ induces an isomorphism between $(A^\wedge_{(a)})^\perf$ and $(B^\wedge_{(f(a))})^\perf$. 
    The assumption on $f$ induces a map from $A^\wedge_{(a)}$ to $\Frob^{-N}(B^\wedge_{(f(a))})$ and one from $B^\wedge_{(f(a))}$ to $\Frob^{-N}(A^\wedge_{(a)})$. These two maps induce a desirable isomorphism. 
\end{proof}

To resolve this dependency, we impose a mild finiteness condition on perfect adic rings. 
Let $k$ be a perfect field of characteristic $p$. 

\begin{defi}
    A perfect adic $k$-algebra is said to be perfectly formally of finite type if it is isomorphic to the perfection of a complete adic $k$-algebra formally of finite type. 
    Such a complete adic $k$-algebra is called a deperfection formally of finite type. 
\end{defi}

\begin{lem}\label{lem:indep2}
    Let $A$ and $B$ be reduced complete adic $k$-algebras formally of finite type. 
    For any isomorphism $f\colon A^\perf\to B^\perf$ of adic $k$-algebras, there is a positive integer $N$ such that $f(A)\subset \Frob^{-N}(B)$ and $f^{-1}(B)\subset \Frob^{-N}(A)$. 
\end{lem}
\begin{proof}
    Let $A_0$ be a $k$-algebra of finite type with an ideal $I$ such that $A$ is isomorphic to $(A_0)^\wedge_I$. 
    Fix an isomorphism $\iota\colon A\cong (A_0)^\wedge_I$. 
    Since $A_0$ is of finite type over $k$, $(f\circ \iota^{-1})(A_0)$ lies in $\Frob^{-N}(B)$ for some positive integer $N$. 
    Then, we have $f(A)\subset \Frob^{-N}(B)$ from the continuity of $f$. 
    The opposite direction can be handled in the same way. 
\end{proof}

We can finally define the spectrum of a perfect adic ring perfectly formally of finite type over $k$. 
Note that the reduction of a complete Noetherian adic ring is automatically complete. 

\begin{defi}
    Let $A$ be a perfect adic ring perfectly formally of finite type over $k$. 
    Let $A_0$ be a reduced deperfection of $A$ formally of finite type over $k$. 
    The perfect formal spectrum of $A$ is defined as the perfection of $\Spf(A_0)$ and denoted by $\Sppf(A)$.  
    It is a topologically ringed space that is independent of the choice of $A_0$. 
\end{defi}

Note that we have natural morphisms $\Spf(A^\wedge)\to \Sppf(A)\to \Spec(A)$ of topologically ringed spaces. 
The first morphism is the right adjoint to the category of perfect formal schemes and the second one is the left adjoint to the category of perfect schemes. 
Here, perfect formal schemes mean formal schemes of characteristic $p$ whose structure sheaves are perfect. 
Since $A$ is the perfection of a Noetherian ring, the underlying space of $\Spec(A)$ is Noetherian. 

Distinguished open subspaces of $\Sppf(A)$ can be described in the usual way. 
For an element $f\in A$, the open subspace $D(f)$ is isomorphic to $\Sppf(A_0[\tfrac{1}{f}])^{\wedge,\perf}$, where $A_0$ is a deperfection of $A$ formally of finite type over $k$ containing $f$. 
We may introduce a class of topologically ringed spaces Zariski locally isomorphic to the spectrum of a perfect adic ring, which we call \textit{formal perfect schemes}.

\subsection{Completion of perfect schemes}\label{ssec:formcomp}

Let $k$ be a perfect field of characteristic $p$. 
Recall that a perfect $k$-scheme $X$ is said to be locally perfectly of finite type if it is Zariski locally isomorphic to the spectrum of the perfection of a finite type $k$-algebra. 
A $k$-scheme $X_0$ locally of finite type with an isomorphism $X_0^\perf \cong X$ is called a deperfection of $X$. 
Basically, when we define the completion of $X$ along a closed perfect subscheme $Z$, we take a deperfection $X_0$ of $X$ and a closed subscheme $Z_0$ of $X_0$ that is a deperfection of $Z$, and then use the perfection of the completion $(X_0)^\wedge_{Z_0}$. 
We will justify this construction in the following. 

Let $X$ be a perfect $k$-scheme perfectly locally of finite type and $Z$ be a closed perfect subscheme of $X$. 
First, suppose that $X=\Spec(A)$ is affine. 
Take a deperfection $A_0$ of $A$ of finite type over $k$.
Let $I_0$ be an ideal of $A_0$ such that $Z=\Spec(A_0/I_0)^\perf$. 
Then, we define the completion $X^\wedge_Z$ of $X$ along $Z$ as $\Sppf((A_0)^\wedge_{I_0})^\perf$. 
As we see in the following lemma, it is independent of the choice of $A_0$ and $I_0$. 
For an ideal $J$ of a ring $B$ of characteristic $p$, the ideal $\bigcup_{N\geq 0} \Frob^{-N}(J)$ of $B^\perf$ is denoted by $J^\perf$. 

\begin{lem}
    Let $A_0$ and $A_1$ be finite type $k$-algebras and let $f\colon A_0^\perf\cong A_1^\perf$ be an isomorphism. 
    For every ideal $I_0$ of $A_0$ and every ideal $I_1$ of $A_1$ such that $f(I_0^\perf)=I_1^\perf$, $f$ induces an isomorphism $((A_0)^\wedge_{I_0})^\perf\cong ((A_1)^\wedge_{I_1})^\perf$ of adic $k$-algebras. 
\end{lem}
\begin{proof}
    We may suppose that $A_0$ and $A_1$ are reduced. 
    Since $A_0$ and $A_1$ are of finite type over $k$, the ideals $I_0$ and $I_1$ are finitely generated, so there is an integer $N$ such that $f(A_0)\subset \Frob^{-N}(A_1)$, $f(I_0)\subset \Frob^{-N}(I_1)$, $f^{-1}(A_1)\subset \Frob^{-N}(A_0)$ and $f^{-1}(I_1)\subset \Frob^{-N}(I_0)$. 
    Thus, $f$ induces $(A_0)^\wedge_{I_0} \to \Frob^{-N}((A_1)^\wedge_{I_1})$ and $(A_1)^\wedge_{I_1}\to \Frob^{-N}((A_0)^\wedge_{I_0})$, which induce an isomorphism $((A_0)^\wedge_{I_0})^\perf\cong ((A_1)^\wedge_{I_1})^\perf$. 
\end{proof}

The description of distinguished open subspaces of the spectrum of a perfect adic ring tells us that this construction can be glued globally and gives rise to the formal perfect scheme $X^\wedge_Z$.  
In case where $X$ and $Z$ have global deperfections $X_0$ and $Z_0$, $X^\wedge_Z$ is isomorphic to the perfection of $(X_0)^\wedge_{Z_0}$. 
In case $Z$ is a point, we will use the following terminology. 

\begin{defi}
    Let $X$ be a perfect $k$-scheme perfectly locally of finite type and $x\in \lvert X\rvert$ be a closed point. 
    We refer to the ring of global sections of the completion of $X$ at $x$ as the \textit{formal local ring of $X$ at $x$}.  
\end{defi}

\subsection{Dimension theory of local adic rings}\label{ssec:dim}

We will study the dimension of a perfect scheme at each closed point through its formal local ring, or sometimes through its completion. 
The Krull dimension theory can be applied to formal local rings as they are the perfection of Noetherian rings, but it cannot be applied to their completion. 
In this section, we will develop a dimension theory that behaves well also to the completion of formal local rings. 

\begin{defi}
    An adic ring is called local if it is a local ring with the maximal ideal consisting of topologically nilpotent elements. 
\end{defi}

\begin{defi}\label{defi:dim}
Let $A$ be a local adic ring. A sequence of elements $f_1,\ldots,f_n$ in $A$ is said to be a system of parameters if the ideal $(f_1,\ldots,f_n)$ is an ideal of definition. We define the dimension of $A$ as the minimum length of a system of parameters. It is denoted by $\dim A$.
\end{defi}

When $A$ is a local Noetherian ring, it is well-known that this definition is consistent with the Krull dimension. 
What matters here is that this dimension theory using systems of parameters satisfies the invariance under perfection and completion. 

\begin{lem}\label{lem:approxgen}
Let $A$ be a local adic ring and let $I$ be an ideal of definition of $A$ generated by $f_1,\ldots,f_n$. For any elements $a_1,\ldots,a_n$ of $I^2$, $f_1+a_1,\ldots,f_n+a_n$ generate $I$. 
\end{lem}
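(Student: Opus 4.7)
The plan is to reduce to Nakayama's lemma. Set $J=(f_1+a_1,\ldots,f_n+a_n)$; the goal is to prove $J=I$. One inclusion is free: since each $a_i\in I^2\subset I$ and $f_i\in I$, we get $f_i+a_i\in I$, hence $J\subset I$.

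For the reverse inclusion, the first step is the trivial but crucial identity $f_i=(f_i+a_i)-a_i\in J+I^2$, which gives $I\subset J+I^2$ and therefore $I=J+I^2$. This means that, viewing $I/J$ as an $A$-module (finitely generated, since it is generated by the images of $f_1,\ldots,f_n$), one has $I/J=I\cdot(I/J)$.

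The second step is to place $I$ inside the Jacobson radical so Nakayama applies. Because $A$ is local with maximal ideal $A^{\dc}$, its Jacobson radical coincides with $A^{\dc}$. On the other hand, since $I$ is an ideal of definition, each element of $I$ is topologically nilpotent, so $I\subset A^{\dc}$. Applying Nakayama's lemma to the finitely generated module $I/J$ then yields $I/J=0$, i.e.\ $I=J$.

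There is no real obstacle here beyond checking that the usual proof of Nakayama's lemma goes through in this setting; the only point one needs to verify is the containment $I\subset A^{\dc}$, which is immediate from the fact that an ideal of definition must consist of topologically nilpotent elements. Note that finite generation of $I$ is used essentially, so the result would fail without the hypothesis that $I$ has a finite set of generators $f_1,\ldots,f_n$.
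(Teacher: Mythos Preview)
Your proof is correct and follows essentially the same approach as the paper: both establish $I=(f_1+a_1,\ldots,f_n+a_n)+I\cdot A^{\dc}$ (you via the slightly sharper $I=J+I^2$ together with $I\subset A^{\dc}$) and then invoke Nakayama's lemma using that $I$ is finitely generated. Your version just spells out in more detail why $I\subset A^{\dc}$.
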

\begin{proof}
    Let $\mfr{m}_A$ be the maximal ideal of $A$. Then, we have $I=(f_1+a_1,\ldots,f_n+a_n)+I\cdot \mfr{m}_A$. Since $I$ is a finite $A$-module, we may apply Nakayama's lemma to obtain $I=(f_1+a_1,\ldots,f_n+a_n)$. 
\end{proof}
\begin{cor}\label{cor:openofideal}
If the closure of an ideal $I$ of a local adic ring $A$ is open, the ideal $I$ is open. 
\end{cor}
\begin{proof}
    Let $J$ be a finitely generated ideal of definition of $A$ contained in the closure of $I$. For any element $f \in J$, there exists an element $a \in J^2$ such that $f+a\in I$. By \Cref{lem:approxgen}, we have a set of generators of $J$ contained in $I$. Thus, $J\subset I$. 
\end{proof}

\begin{prop}\label{lem:invdimperf}
    The dimension of a local adic ring of characteristic $p$ is invariant under perfection. 
\end{prop}
\begin{proof}
    Let $A$ be a local adic ring of characteristic $p$. Since a system of parameters for $A$ is also that for $A^\perf$, we have $\dim A \geq \dim A^\perf$. We prove the converse inequality. Let ${f}_1,\ldots,{f}_n$ be a set of generators of an ideal of definition $J$ of $A^\perf$, which may be assumed to be in $A$ by taking the $p^N$-th power for sufficiently large $N$. Let $g_1,\ldots,g_m$ be a set of generators of an ideal of definition $I$ of $A$. We may assume that $I\cdot A^\perf\subset J$. Then, we may write $g_i=\sum\limits_{1\leq j\leq n} a_{ij}f_j$ where $a_{ij}\in A^\perf$. Again by taking the $p^N$-th power for sufficiently large $N$, we may assume that $a_{ij}\in A$. Then, $f_1,\ldots,f_n$ generate an ideal of definition of $A$. 
\end{proof}

\begin{prop}\label{lem:invdimcomp}
The dimension of a local adic ring is invariant under completion. 
\end{prop}
\begin{proof}
    Let $A$ be a local adic ring. Since a system of parameters for $A$ is also that for $\cp{A}$, we have $\dim A \geq \dim \cp{A}$. We prove the converse inequality. Let $\hat{f}_1,\ldots,\hat{f}_n$ be a set of generators of an ideal of definition $J$ of $\cp{A}$. By \Cref{lem:approxgen}, we may assume that each $\hat{f}_i$ comes from an element $f_i$ of $A$. Let $g_1,\ldots,g_m$ be a set of generators of an ideal of definition $I$ of $A$. We may assume that $I\cdot \cp{A} \subset J$. Then, there exist elements $a_1,\ldots,a_m$ of $I^2$ such that $g_1+a_1,\ldots,g_m+a_m$ is contained in the ideal $(f_1,\ldots,f_n)$ of $A$. They generate $I$ by \Cref{lem:approxgen}, thus $f_1,\ldots,f_n$ is also a system of parameters for $A$. 
\end{proof}

It is sometimes hard to obtain finiteness of homomorphisms when we deal with non-Noetherian rings. 
As a substitute for it, we will rely on the adicness of homomorphisms to compare dimensions. 

\begin{prop}\label{lem:adicdim}
Let $A$ and $B$ be local adic rings and suppose that we have an adic homomorphism $A\to B$. Then, we have $\dim A \geq \dim B$. 
\end{prop}
\begin{proof}
    Any system of parameters of $A$ is sent to a system of parameters of $B$. 
    Thus, we have $\dim A\geq \dim B$. 
\end{proof}

\begin{lem}\label{lem:adiccrit}
    Let $f\colon A\to B$ be a continuous homomorphism of local adic perfect rings. Let $C$ be the quotient of $B$ by the closure of the ideal $A^{\circ \circ}\cdot B$ in $B$. Then, $f$ is adic if and only if $C$ is a field. 
\end{lem}
\begin{proof}
    If $f$ is adic, $A^{\circ \circ} \cdot B$ contains an ideal of definition of $B$. Since we have $A^{\circ \circ} \cdot B = \Frob(A^{\circ \circ} \cdot B)$ and $f$ is continuous, it follows that $A^{\circ \circ} \cdot B = B^{\circ \circ}$ and $C$ is a field. On the other hand, if $C$ is a field, it follows from \Cref{cor:openofideal} that $A^{\circ \circ}\cdot B$ is an open ideal of $B$. In particular, $A^{\circ \circ} \cdot B$ contains a finitely generated ideal of definition of $B$. Thus, there exists a finitely generated ideal of definition $J$ of $A$ such that $J\cdot B$ is an ideal of definition of $B$. 
\end{proof}

In the above situation, we say that $\Spf(C)$ is the fiber of the closed point of $\Spf(A)$ in $\Spf(B)$. Note that $C$ is a local adic perfect ring with the quotient topology induced from $B$ and is separated with respect to the adic topology. 

\section{Continuity of Witt vectors}\label{sec:2}

The aim of this section is to introduce the notion of continuity of sections of loop groups, positive loop groups and affine Grassmannians on local adic perfect rings.
The crucial result for our application in \Cref{sec:4} is that every section of affine Grassmannians has a lift to loop groups with the same continuity under a mild condition.

\subsection{$F$-standard ideals}\label{ssec:Fstd}

In this section, we introduce a certain class of ideals of perfect rings, which we call $F$-standard. It will be used to measure the continuity of sections of loop groups, positive loop groups and affine Grassmannians. 

\begin{defi}
    An ideal $J$ of a perfect ring $A$ is \textit{$F$-standard} if $J^p=\Frob(J)$. 
\end{defi}

The reason we single out this class of ideals lies in the fact that an $F$-standard ideal $J$ defines an ideal $[J]$ of the ring of Witt vectors that is a mixed characteristic analogue of the ideal $J\llbracket t \rrbracket$ of the ring of formal power series $A\llbracket t \rrbracket$. 

Let $F$ be a local field over $\bb{Q}_p$ with a ring of integers $O$. Let $\pi$ be a uniformizer of $F$ and $k$ be the finite residue field of $O$. 
For a perfect $k$-algebra $A$, let $W_O(A)=W(A)\otimes_{W(k)} O$ and $W_{O,n}(A)=W_O(A)\otimes_{O} O/\pi^n$. 
The Teichm\"{u}ller lift of an element $a\in A$ in $W_O(A)$ is denoted by $[a]$. 

\begin{prop}
    For an $F$-standard ideal $J$ of a perfect $k$-algebra $A$, the subset $[J]$ of $W_O(A)$ consisting of the elements $\sum\limits_{n=0}^\infty [a_n]\pi^n$ with $ a_n\in J$ is an ideal of $W_O(A)$.
\end{prop}
\begin{proof}
    It is clear that $[J]$ is closed under the multiplication by $\pi$ and by $[a]$ for every $a\in A$.
    Thus, it is enough to show that $[J]$ is closed under addition. 
    Consider the equation $\sum\limits_{n=0}^\infty [a_n]\pi^n + \sum\limits_{n=0}^\infty [b_n]\pi^n = \sum\limits_{n=0}^\infty [c_n]\pi^n$ in $W_O(A)$. It is enough to show that $c_n\in J$ whenever $a_n, b_n \in J$ for every $n\geq 0$. By passing to the universal case, we may assume that $A=k[a_0,a_1,\ldots, b_0,b_1,\ldots]^\perf$ and $J\subset A$ is the minimum $F$-standard ideal containing $a_n$ and $b_n$ for every $n\geq 0$. Now, since Teichm\"{u}ller lifts are multiplicative, we have $\sum\limits_{n=0}^\infty [a_0a_n]\pi^n + \sum\limits_{n=0}^\infty [a_0b_n]\pi^n = \sum\limits_{n=0}^\infty [a_0c_n]\pi^n$. It implies that $c_n$ is sent to $a_0c_n$ through the homomorphism $A\to A$ sending $a_n$ (resp.\ $b_n$) to $a_0a_n$ (resp.\ $a_0b_n$). If we set a degree function on $A$ so that the degrees of $a_n^{1/p^k}$ and $b_n^{1/p^k}$ are $p^{-k}$ for every $n,k \geq 0$, we see that $c_n$ is homogeneous of degree $1$. Thus, for some sufficiently large $N\geq 0$, we have $c_n^{p^N} \in (a_0,a_1,\ldots,b_0,b_1,\ldots)^{p^N}\subset J^{p^N} = \Frob^{N}(J)$ and we see that $c_n\in J$. 
\end{proof}

The ideal $[J]$ enables us to define congruence modulo $J$ of $W_O(A)$-valued points of a suitable presheaf $X$. 
For an element $x\in X(W_O(A))$, its restriction to $W_O(A)/[J]$ is denoted by $x\vert_{W_O(A)/[J]}$. We say that two elements $x,x'\in X(W_O(A))$ are congruent modulo $J$ if $x\vert_{W_O(A)/[J]}=x'\vert_{W_O(A)/[J]}$.
To see that this congruence works well, we show that every adic perfect ring has an $F$-standard ideal of definition. 

\begin{defi}
    For an ideal $I$ of a perfect ring $A$, the minimum $F$-standard ideal containing $I$, which is $\bigcup\limits_{n\geq 0} \left(\Frob^{-n}(I)\right)^{p^n}$, is called the \textit{$F$-standardization} of $I$. 
\end{defi}

\begin{lem} \label{lem:Fstdfunc}
    Let $f\colon A\to B$ be a homomorphism of perfect rings. Let $I$ be an ideal of $A$ and let $J=I\cdot B$. Let $\wtd{I}$ (resp.\ $\wtd{J}$) be the $F$-standardization of $I$ (resp.\ $J$). We have $\wtd{J}=\wtd{I}\cdot B$. In particular, $J$ is $F$-standard if $I$ is $F$-standard. 
\end{lem}
\begin{proof}
    We have $\wtd{J}=\bigcup\limits_{n\geq 0} \left(\Frob^{-n}(I\cdot B)\right)^{p^n} = \bigcup\limits_{n\geq 0} \left(\Frob^{-n}(I)\right)^{p^n} \cdot B = \wtd{I}\cdot B$. 
\end{proof}

\begin{lem}\label{lem:Fstdpre}
    For an ideal of definition $I$ of an adic perfect ring $A$, the $F$-standardization of $I$ is an ideal of definition of $A$.
\end{lem}
\begin{proof}
    We may replace $I$ with a finitely generated ideal of definition. 
    Let $f_1,f_2,\ldots,f_m$ be a finite set of generators of $I$. The $F$-standardization $J$ of $I$ is the union of the ideals $\bigl(f_1^{1/p^n},f_2^{1/p^n},\ldots,f_m^{1/p^n}\bigr)^{p^n}$. We have $\bigl(f_1^{1/p^n},f_2^{1/p^n},\ldots,f_m^{1/p^n}\bigr)^{mp^n} \subset \left(f_1,f_2,\ldots,f_m\right)$, so $J^m\subset I \subset J$ and $J$ is an ideal of definition of $A$. 
\end{proof}

\begin{cor}
The completion of an adic perfect ring is perfect. 
\end{cor}
\begin{proof}
Let $A$ be a perfect adic ring and let $J$ be an $F$-standard ideal of definition. The Frobenius map induces an isomorphism $A/J^n \cong A/J^{pn}$. Thus, the Frobenius map on $A^\wedge$ is an isomorphism. 
\end{proof}

The same argument works for a bit weaker notion, which we call topologically nilpotent. 

\begin{defi}
    We say that an ideal of an adic ring $A$ is \textit{topologically nilpotent} if it is contained in an ideal of definition of $A$. 
\end{defi}

\begin{prop}
    For an ideal $I$ of an adic ring $A$, the following are equivalent. 
    \begin{enumerate}
        \item The ideal $I$ is topologically nilpotent.
        \item For some positive integer $n$, $I^n$ is topologically nilpotent. 
        \item For every ideal of definition $J$ of $A$, there is a positive integer $n$ such that $I^n\subset J$. 
    \end{enumerate}
\end{prop}

\begin{proof}
It is easy to see that $(1)$ implies $(2)$ and $(2)$ implies $(3)$. Suppose the condition $(3)$. 
Then, for every ideal of definition $J$, the ideal $I+J$ is an ideal of definition of $A$, so
$(3)$ implies $(1)$. 
\end{proof}

\begin{lem}
    For a topologically nilpotent ideal $I$ of an adic perfect ring $A$, the $F$-standardization of $I$ is topologically nilpotent. 
\end{lem}
\begin{proof}
It follows from \Cref{lem:Fstdpre} because $I$ is contained in an ideal of definition of $A$. 
\end{proof}

\subsection{Loop groups, positive loop groups and affine Grassmannians}\label{ssec:contsec}
In this section, we define and study the continuity of sections of loop groups, positive loop groups and affine Grassmannians on local adic perfect rings. 

Let $G$ be a reductive group over $F$ and $\cl{G}$ be a smooth affine model of $G$ over $O$. We do not need to assume that $\cl{G}$ is reductive. 
The loop group $LG$ and the positive loop group $L^+\cl{G}$ are fpqc sheaves on perfect $k$-algebras that send a perfect $k$-algebra $A$ to the group $G(W_O(A)\invp)$ and $\cl{G}(W_O(A))$, respectively. 
The affine Grassmannian $\GR_{\cl{G}}$ is the quotient $LG/L^+\cl{G}$ as a fpqc sheaf. 
For a section $g$ of $LG$, its class in $\GR_\cl{G}$ is denoted by $[g]$. 

Let $\kappa$ be a perfect field over $k$ and let $A$ be a local adic perfect $\kappa$-algebra with a residue field isomorphic to $\kappa$. 
For a sheaf $X\in \{LG,L^+\cl{G},\GR_\cl{G}\}$ and a section $x\in X(A)$, we measure the difference between $x$ and its pullback $x_\kappa \in X(\kappa)$ via the congruence modulo $F$-standard ideals. The pullback $x_\kappa$ is called the reduction of $x$. For an $O$-algebra $R$, the pullback of an element $g\in \cl{G}(R)$ to an $R$-algebra $S$ is denoted by $g\vert_S$. 

\begin{defi}
An $A$-valued point $g\in G(W_O(A)\invp)$ of $LG$ is \textit{continuous}
if there exists a topologically nilpotent $F$-standard ideal $I$ of $A$ such that $g\vert_{W_O(A)\invpp/[I]}$ is equal to $g\vert_{W_O(\kappa)\invpp}$ in $G(W_O(A)[\tfrac{1}{p}]/[I])$.
An $A$-valued point $g\in \cl{G}(W_O(A))$ of $L^+\cl{G}$ is \textit{continuous}
if there exists a topologically nilpotent $F$-standard ideal $I$ of $A$ such that $g\vert_{W_O(A)/[I]}$ is equal to $g\vert_{W_O(\kappa)}$ in $\cl{G}(W_O(A)[\tfrac{1}{p}]/[I])$.
\end{defi}

Here, we identify $g\vert_{W_O(\kappa)[1/p]}$ (resp.\ $g\vert_{W_O(\kappa)}$) with an element of $G(W_O(A)[\tfrac{1}{p}]/[I])$ (resp.\ $\cl{G}(W_O(A)/[I])$) via the pullback along $W_O(\kappa)[1/p] \to W_O(A)[\tfrac{1}{p}]/[I]$ (resp.\ $W_O(\kappa) \to W_O(A)/[I]$). 
When $g$ satisfies the given condition for a topologically nilpotent $F$-standard ideal $I$, we say that $g$ is $I$-continuous. 
As we see in the following lemma, the set of such ideals $I$ has a unique minimal element if it is non-empty. 

\begin{lem}\label{lem:contloop}
    For a continuous $A$-valued point $g$ of $LG$ or of $L^+\cl{G}$, the set of topologically nilpotent $F$-standard ideals $I$ such that $g$ is $I$-continuous has a unique minimal element. The minimal element is called the continuity of $g$. 
\end{lem}
\begin{proof}
    The $I$-continuity of $g$ is equivalent to that $x=g^{-1}g_\kappa$ is congruent to the identity modulo $I$. 
    Let $f_1,\ldots,f_n$ be a set of generators of $\Gamma(\cl{G},\cl{O}_{\cl{G}})$ as an $O$-algebra such that $f_i(\mrm{id}_{\cl{G}})=0$ for all $i$. 
    The triviality of $x$ modulo $I$ is equivalent to the triviality of all coefficients of $f_i(x)$ modulo $I$ for all $i$. 
    By the continuity of $g$, these coefficients lie in a topologically nilpotent $F$-standard ideal. 
    Thus, the $F$-standard ideal generated by the coefficients of $f_i(x)$ is the desired minimal element. 
\end{proof}

It turns out that every $A$-valued point of $\GR_\cl{G}$ (coming from $LG$) is continuous in a similar sense and we can define its continuity. This reflects the fact that affine Grassmannians are of ind-finite type. 

\begin{prop}\label{prop:radideal}
    For every $g\in G(W_O(A)\invp)$, there exists a topologically nilpotent $F$-standard ideal $I$ of $A$ such that $g\vert_{W_O(A)\invpp/[I]}$ and $g\vert_{W_O(\kappa)\invpp}$ represent the same class in $G(W_O(A)\invp/[I])/\cl{G}(W_O(A)/[I])$. Moreover, the set of such ideals $I$ has a unique minimal element. The minimal element is called the continuity of $[g]$. 
\end{prop}
\begin{proof}
    Take $x$ and $f_1,\ldots,f_n$ as in the proof of \Cref{lem:contloop}. 
    The congruence of $[g]$ and $[g_\kappa]$ modulo $I$ is equivalent to $x\vert_{W_O(A)\invpp/[I]}\in \cl{G}(W_O(A)/[I])$. 
    This condition is equivalent to the triviality of the negative coefficients of $f_i(x)$ modulo $I$ for all $i$. 
    There are only finitely many nontrivial negative coefficients and they are all topologically nilpotent. Thus, the $F$-standard ideal generated by the negative coefficients of $f_i(x)$ is the desired minimal element. 
\end{proof}

We see from the construction that for a sheaf $X\in \{LG,L^+\cl{G},\GR_\cl{G}\}$ and a section $x\in X(A)$, the continuity of $x$ is functorial in $A$. 

\begin{lem}\label{rem:bcofideal}
    Let $A$ and $B$ be local adic perfect $\kappa$-algebras with a residue field isomorphic to $\kappa$. Let $f\colon A\to B$ be a homomorphism of $\kappa$-algebras. 
    For every continuous $A$-valued point $g$ of $LG$, of $L^+\cl{G}$, or of $\GR_\cl{G}$ coming from that of $LG$, $f^*g$ is continuous and the continuity of $f^*g$ is generated by the image of the continuity of $g$ along $f$. 
\end{lem}
\begin{proof}
    Since the residue fields of $A$ and $B$ are both isomorphic to $\kappa$, we see that $f$ is local. Thus, the image of a finitely generated ideal of definition of $A$ along $f$ generates a topologically nilpotent ideal of $B$, so $f$ is continuous. 
    
    Let $I$ be the continuity of $f$ and let $J=I\cdot B$. Since $I$ is topologically nilpotent and $f$ is continuous, $J$ is topologically nilpotent. Moreover, $J$ is $F$-standard by \Cref{lem:Fstdfunc}. Since we have a homomorphism $W_O(A)/[I] \to W_O(B)/[J]$, we see that $f^*g$ is $J$-continuous. Now, take $x$ and $f_1,\ldots,f_n$ as in the proof of \Cref{lem:contloop}. The continuity of $g$ and $f^*g$ is generated by certain coefficients of $f_i(x)$ and $f_i(f^*x)$, respectively. Since we have $f_i(f^*x)=W_O(f)(f_i(x))$, the continuity of $f^*g$ is equal to $J$. 
\end{proof}

For a continuous element $g\in G(W_O(A)\invp)$, the continuity of $[g]$ is contained in the continuity of $g$, but the converse inclusion does not hold in general. 
However, under a mild assumption, we can show that every $A$-valued point of $\GR_\cl{G}$ has a representative in $LG$ with the same continuity. 

\begin{prop}\label{prop:modifpoint}
Suppose that $A$ is the perfection of a complete local Noetherian $\kappa$-algebra with a residue field isomorphic to $\kappa$.
For every $g\in G(W_O(A)\invp)$, there exists a representative of $[g]$ in $G(W_O(A)\invp)$ with the same continuity as $[g]$. 
\end{prop}
\begin{proof}
    Let $I$ be the continuity of $[g]$. Let $x=g^{-1}g_\kappa$ and $a=x\vert_{W_O(A)\invpp/[I]}$. We have $a\in \cl{G}(W_O(A)/[I])$ by the definition of $I$. If we can lift it to an element of $\cl{G}(W_O(A))$, the right translation of $g$ by the lift is a desired representative. Thus, it is enough to construct a lift of $a$. We successively construct its $\pi^n$-approximation $a_n\in \cl{G}(W_{O,n}(A))$. First, we consider the case $n=1$. In this case, it is enough to show that the map $\cl{G}(A)\to \cl{G}(A/I)$ is surjective. Since $\cl{G}$ is finitely presented, an $(A/I)$-valued point of $\cl{G}$ can be lifted to an $(A_0/(I\cap A_0))$-valued point for some deperfection $A_0$ of $A$. Since $A_0$ is a complete local Noetherian $\kappa$-algebra, and thus $(I\cap A_0)$-adically complete, such a lift can also be lifted to an $A_0$-valued point by the smoothness of $\cl{G}$. Then, we can pull back such a lift to an $A$-valued point. Next, suppose that we have a lift $a_n$ for some $n \geq 1$. Take a lift $a'$ of $a_n$ to a $W_{O,n+1}(A)$-valued point. The difference between $a$ and $a'$ in $\cl{G}(W_{O,n+1}(A)/[I])$ is trivial modulo $\pi^n$, so it corresponds to an element of $\Lie_{A/I}\hsp{3pt}\cl{G}$. Since $\Lie_{A}\hsp{3pt}\cl{G} \to \Lie_{A/I}\hsp{3pt}\cl{G}$ is surjective, the difference can be lifted to a $W_{O,n+1}(A)$-valued point trivial modulo $\pi^n$. By modifying $a'$ by the lift, we have a lift $a_{n+1}$ compatible with $a_n$. Finally, we can take $\lim\limits_{n\to \infty} a_n$ as a lift of $a$. 
\end{proof}

This representative in $LG$ can be characterized functorially through \Cref{rem:bcofideal}. This functorial characterization is crucial for our application in \Cref{sec:4}. 

\begin{cor}\label{thm:constdef}
Suppose that $\kappa$ is algebraically closed and $A$ is the perfection of a complete local Noetherian $\kappa$-algebra with a residue field isomorphic to $\kappa$. 
For every $A$-valued point $[g]\in \GR_\cl{G}(A)$, there exists a continuous representative $g\in G(W_O(A)\invp)$ such that 
for every local adic perfect $A$-algebra $B$ with a residue field isomorphic to $\kappa$ and every $F$-standard ideal $J$ of $B$, the congruence of $[g]_B$ and $[g]_\kappa$ modulo $J$ is equivalent to the congruence of $g_B$ and $g_\kappa$ modulo $J$. 
\end{cor}
\begin{proof}
We may write $A=R^\perf$ with $R$ a complete local Noetherian $\kappa$-algebra
with a residue field isomorphic to $\kappa$. 
Since $\kappa$ is algebraically closed, $R$ is strictly henselian, so 
its perfection $A$ is also strictly henselian. 
Thus, any $\cl{G}$-torsor on $W_O(A)$ is trivial, and 
we may take a representative $g$ of $[g]$ in $G(W_O(A)\invp)$. 

Let $I$ be the continuity of $[g]$.  
By \Cref{prop:modifpoint}, we may take a representative $g$ with the continuity $I$. 
We show that this choice of $g$ satisfies the given condition. 
By \Cref{rem:bcofideal}, the continuity of $[g]_B$ and that of $g_B$ are both equal to $I\cdot B$, so we have the desired equivalence. 
\end{proof}

\section{Fundamental alcoves}\label{sec:3}
In this section, we review the notion of fundamental alcoves developed in \cite{GHKR10} and \cite[Section 6]{Vie14}. It is a key technical ingredient of the argument in \cite{HV12}. Note that \cite[Section 6]{Vie14} only deals with the case $F=W(\bb{F}_q)[\tfrac{1}{p}]$ for some $q=p^r$ in mixed characteristic, but its argument works for any non-archimedean local field $F$. 

We keep the notation in \Cref{sec:2} and assume from now on that $\cl{G}$ is reductive. 
Moreover, we fix an algebraic closure $\ov{k}$ of $k$ and work over the category of perfect $\ov{k}$-algebras when we consider loop groups, positive loop groups and affine Grassmannians. 
Let $q$ be the cardinality of $k$. For a perfect $k$-algebra $A$, the relative Frobenius on $W_O(A)$ (taking $q$-th power of each coefficient) is denoted by $\sigma$. Its action on $LG$ and $L^+\cl{G}$ is also denoted by $\sigma$. 
Let $L=W_O(\ov{k})\invp$ and $O_L=W_O(\ov{k})$. 

Fix a Borel pair $T \subset B \subset G$ and consider the apartment $\cl{A}(G_L,T_L)$ of the Bruhat-Tits building of $G_L$. 
We set the origin of the apartment $\cl{A}(G_L,T_L)$ to the hyperspecial point corresponding to $\cl{G}$ and set the positive chamber to the one corresponding to $B_L$. The negative chamber contains a unique alcove containing the origin. We fix an Iwahori subgroup to the one corresponding to the alcove and denote its positive loop group by $I$. 

A parabolic subgroup of $G_L$ containing $T_L$ (not necessarily defined over $F$) is called semistandard and usually denoted by $P$. Its unipotent radical is denoted by $N$ and its Levi subgroup containing $T_L$ is denoted by $M$. The unipotent radical of the opposite of $P$ is denoted by $\ov{N}$. The Iwahori subgroup $I$ has an Iwahori decomposition $I=I_NI_MI_{\ov{N}}$. Here, $I_H$ denotes the intersection $I\cap LH$ for a subgroup $H$ of $G_L$. 

Let $\cl{T}$ be the identity component of the N\'{e}ron model of $T$ and let $N_T$ be the normalizer of $T$ in $G$. The extended affine Weyl group $N_T(L)/\cl{T}(O_L)$ is denoted by $\wtd{W}$ and that of a Levi subgroup $M$ is denoted by $\wtd{W}_M$. The length function on $\wtd{W}$ with respect to $I$ is denoted by $\ell$. For $x \in G(L)$, let $\phi_x\colon LG\to LG$ be the map sending $g$ to $\sigma(xgx^{-1})$. 

\begin{defi}\textup{(\cite[Definition 6.1]{Vie14})}
An element $x$ of $\wtd{W}$ is \textit{$P$-fundamental} for a semistandard parabolic subgroup $P$ if $\phi_x(I_M)=I_M$, $\phi_x(I_N)\subset I_N$ and $\phi_x^{-1}(I_{\ov{N}})\subset I_{\ov{N}}$. 
\end{defi}

We fix a positive integer $r$ so that $T$ splits over the unramified extension of $F$ of degree $r$. The action of $\sigma^r$ on $\wtd{W}$ is trivial. Let $x\in \wtd{W}$ be a $P$-fundamental element and let $x'=\sigma(x)\sigma^2(x)\cdots\sigma^r(x)$. Since $\phi_x^r$ preserves $LP$, we have $x'Px'^{-1}=P$ and $x'\in \wtd{W}_M$. Moreover, since $\phi_x^r$ preserves the Iwahori subgroup $I_M$, some power of $x'$ is central in $M$. Thus, we may take a representative of $x$ in $N_T(L)$ so that it is decent in the sense of \cite[Definition 1.8]{RZ96}. The $M$-dominant Newton point of the $\sigma^r$-conjugacy class of $x'$ is denoted by $\nu_{r,x}$. It is central in $M$ because some power of $x'$ is central in $M$. 

\begin{lem}\textup{(\cite[Lemma 6.3]{Vie14})}
Let $M'$ be the centralizer of $\nu_{r,x}$. The semistandard parabolic subgroup $P'=M'N$ is the largest semistandard parabolic subgroup for which $x$ is fundamental. 
\end{lem}

As we see from the following propositions, fundamental alcoves play an important role as well-behaved representatives of $\sigma$-conjugacy classes of $G$. 

\begin{lem}\textup{(\cite[Lemma 6.4]{Vie14})}\label{lem:sigmaconjI}
Every $\ov{k}$-valued point of $IxI$ is $\sigma$-conjugate to $x$ by a $\ov{k}$-valued point of $I$. Every $\ov{k}$-valued point of $xI_M$ is $\sigma$-conjugate to $x$ by a $\ov{k}$-valued point of $xI_Mx^{-1}$. 
\end{lem}

\begin{prop}\textup{(\cite[Theorem 6.5]{Vie14})}
Every $\sigma$-conjugacy class in $G$ has a representative in $\wtd{W}$ that is $P$-fundamental for some $P$. 
\end{prop}

From now on, when we say that $x$ is $P$-fundamental, we assume that $x$ is decent and $M$ is the centralizer of $\nu_{r,x}$. Then, $\{\phi_x^i(I_N)\}_{i\geq 0}$ (resp.\ $\{\phi_x^{-i}(I_{\ov{N}})\}_{i\geq 0}$) is convergent to $1$ as a sequence of subgroups of $I_N$ (resp.\ $I_{\ov{N}}$) with respect to the $\pi$-adic topology. Under this assumption, we have the following variant of \Cref{lem:sigmaconjI}. 

\begin{lem}\textup{(\cite[Lemma 6.4]{Vie14})}\label{lem:sigmaconjII}
Every $\ov{k}$-valued point of $x\phi_x^i(I_N)$ \textup{(}resp.\ $x\phi_x^{-i}(I_{\ov{N}})$\textup{)} is $\sigma$-conjugate to $x$ by a $\ov{k}$-valued point of $x\phi_x^i(I_N)x^{-1}$ \textup{(}resp.\ $\sigma^{-1}(\phi_x^{-i}(I_{\ov{N}}))$\textup{)}. 
\end{lem}

In \Cref{sec:4}, we need the following lemma to carry out the argument in \cite{HV12}. Since it is only proved when $G$ is split in the literature, we record its proof that works in general when $G$ is unramified.  

\begin{lem}\textup{(\cite[Lemma 4.5]{HV12})}\label{lem:IxIisclsd}
As a subfunctor of $LG$, $IxI$ is closed in $[x]$. Here, the subfunctor $[x]$ consists of those sections of $LG$ that are $\sigma$-conjugate to $x$ at each geometric point. 
\end{lem}
\begin{proof}
The closure of $IxI$ in $LG$ is the union of $IyI$ with $y\leq x$ in the Bruhat order. Suppose that $IyI$ meets $[x]$ with $y\leq x$. The Bruhat-Tits decomposition gives $w\in \wtd{W}$ and $i\in I$ satisfying $(iw)^{-1}x\sigma(iw) \in IyI$.  Let $n$ be an arbitrary positive integer and let $x'=x\sigma(x)\cdots \sigma^{n-1}(x)$. The element $(iw)^{-1}x'\sigma^{n}(iw)$ lies in $IyI\sigma(y)I\cdots I\sigma^{n-1}(y)I$, and we have $n\ell(y)+2\ell(w)\geq \ell(x')$. It is enough to show that $\ell(x')=n\ell(x)$ because it implies that $\ell(y)=\ell(x)$ and $y=x$.

We prove that $IxI\sigma(x)I\cdots I\sigma^{n-1}(x)I=Ix'I$ by induction on $n$. Suppose that it holds for some $n$. Since $\phi_x^n(I_P)\subset I_P$ and $\phi_x^{-1}(I_{\ov{N}})\subset I_{\ov{N}}$, we have $x'I_{\sigma^{n-1}(P)}x'^{-1}\subset I_{\sigma^{-1}(P)}$ and $\sigma^{n}(x)^{-1}I_{\sigma^{n-1}(\ov{N})}\sigma^n(x)\subset I_{\sigma^{n}(\ov{N})}$. It implies that $Ix'I\sigma^n(x)I=Ix'\sigma^n(x)I$ by the Iwahori decomposition $I=I_{\sigma^{n-1}(P)}I_{\sigma^{n-1}(\ov{N})}$, and we have the claim for $n+1$. 
\end{proof}
\section{Proof of equidimensionality}\label{sec:4}

The aim of this section is to construct a local foliation as a correspondence and deduce the equidimensionality of $X_{\leq \mu}(b)$. 

\subsection{Affine Deligne-Lusztig varieties}\label{ssec:ADLV}

In this section, we recall the definition of affine Deligne-Lusztig varieties. 
We keep the notation in \Cref{sec:3}. 

Affine Deligne-Lusztig varieties are certain perfect schemes associated with the triple $(\cl{G},b,\mu)$ that are locally perfectly of finite type over $\ov{k}$. Here, $b\in G(L)$ is an element specifying a $\sigma$-conjugacy class $[b]\in B(G)$ and $\mu\in X_*(T)_+$ is a dominant cocharacter. 
Regarded as a sheaf over the category of perfect $\ov{k}$-schemes, the closed affine Deligne-Lusztig variety $X_{\leq \mu}(b)$ is a closed subsheaf of $\GR_\cl{G}$ described as
\[X_{\leq \mu}(b)=\Bigl\{g\in \GR_\cl{G} \Bigm\vert g^{-1}b\sigma(g) \in [L^+\cl{G}\backslash LG/L^+\cl{G}]_{\leq \mu} \Bigr\}.\]
Here, the stack $[L^+\cl{G}\backslash LG/L^+\cl{G}]_{\leq \mu}$ is a substack of $[L^+\cl{G}\backslash LG/L^+\cl{G}]$ classifying modifications of $\cl{G}$-torsors bounded by $\mu$. 
The affine Deligne-Lusztig variety $X_\mu(b)$ is the open subscheme of $X_{\leq \mu}(b)$ where $g^{-1}b\sigma(g)$ lies in $[L^+\cl{G}\backslash LG/L^+\cl{G}]_\mu$, the locus where the differences of modifications are exactly equal to $\mu$. 
It is known that $X_\mu(b)$ is non-empty if and only if $[b]\in B(G,\mu)$ (see \cite[Theorem 5.1]{Gas10}). 

A dimension formula of $X_{\leq \mu}(b)$ was obtained by \cite{GHKR06}, \cite{Vie06} and \cite{Ham15} in equal characteristic and the same arguments were verified to work in mixed characteristic in \cite{Zhu17}. It goes as follows: 
\[\dim X_{\leq \mu}(b)=\langle\rho,\mu-\nu\rangle-\deff(b)/2. \]
Here, $\nu$ is the dominant Newton point of $b$ and $\deff(b)$ is the defect of $b$ (see \cite{GHKR06}). 
We will deduce the equidimensionality of $X_{\leq \mu}(b)$ by giving a lower bound of the dimension at each closed point, which turns out to be equal to the right-hand side of the formula. 

\subsection{Strategy of a proof}
In this section, we review the strategy of a proof in equal characteristic in \cite{HV12} and explain how to translate it into mixed characteristic. 

The main idea is to develop local foliations, or almost product structures, to reduce to the equidimensionality of Newton strata. 
First, let us recall Oort's foliations in \cite{Oor04} briefly. It roughly states that a Newton stratum $\ov{\mrm{Sh}}^b$ of the special fiber of a Shimura variety can be decomposed into the underlying space $\mrm{RZ}^{b,\red}$ of a Rapoport-Zink space and an Igusa variety $\mrm{Igs}^b$. More precisely, there is a finite surjective map $\mrm{RZ}^{b,\red} \times \mrm{Igs}^b\to \ov{\mrm{Sh}}^b$ and this is what we call Oort's foliation. Since $\mrm{Igs}^b$ is smooth and equidimensional, the equidimensionality of $\mrm{RZ}^{b,\red}$ is reduced to that of $\ov{\mrm{Sh}}^b$. 

The problem is that Shimura varieties are not accessible any more when $\mu$ is non-minuscule. The motivation for introducing local foliations is to resolve this issue by taking the completion at a closed point. Via the Serre-Tate theory, it makes the target space $\ov{\mrm{Sh}}^b$ into the closed Newton stratum of the universal deformation space of a certain local geometric object involved with $p$-divisible groups or local shtukas. In particular, local foliations can be purely expressed in terms of these local geometric objects. In \cite{HV12}, they realized this picture with local shtukas. 

In mixed characteristic, we will work with $F$-crystals with $\cl{G}$-structure since they are the local geometric objects parametrized by $X_{\leq \mu}(b)$. This is why we need a theory of formal perfect algebraic geometry developed in \Cref{sec:1} and \Cref{sec:2}. 
However, we still have a serious problem about what would be the universal deformation in this context. We avoid this problem by replacing it with a certain deformation imitating the construction of the universal deformation in \cite{HV11}. 
Due to this lack of universality, our local foliations are not given by finite surjective maps. In \Cref{ssec:compdim}, we will construct local foliations as correspondences between relevant spaces. They turn out to be enough for our purpose to compare the dimensions of both sides. 

Take a closed point $[g]$ of $X_{\leq \mu}(b)$ and fix a representative $g\in G(L)$. We will study the completion $X_{\leq \mu}(b)^\wedge_{[g]}$ of $X_{\leq \mu}(b)$ at $[g]$ to calculate the dimension at $[g]$. 

\subsection{A deformation and the Newton stratification}\label{ssec:Newton}

In this section, we construct a deformation of the $F$-crystal with $\cl{G}$-structure associated with $[g]$ that serves as a replacement for its universal deformation. 

The $F$-crystal with $\cl{G}$-structure associated with $[g]$ is written as $(\cl{G},b'\sigma)$ with $b'=g^{-1}b\sigma(g)$. 
We imitate the construction of its deformation in \cite[Theorem 5.6]{HV11}. 
Let $w_0$ be the longest element of the Weyl group and set $\mu^*=w_0(-\mu)$. 
Let $A$ be the formal local ring of $\GR_{\cl{G},\leq \mu^*}$ at $[b'^{-1}]$. It is of dimension $\langle 2\rho,\mu\rangle$ as in \cite[Proposition 1.23]{Zhu17}. 
There is a natural map $\Spec(A) \to \GR_{\cl{G},\leq \mu^*}$ and we apply \Cref{thm:constdef} to the $A$-valued point of $\GR_\cl{G}$ to obtain a representative $t\in G(W_O(A)\invp)$. We may assume that its reduction is equal to $b'^{-1}$. Then, we obtain a deformation $(\cl{G},t^{-1}\sigma)$ of $(\cl{G},b'\sigma)$ over $A$. 

We will introduce the closed Newton stratum defined by the deformation. 
As in \cite[Theorem 3.6]{RR96}, we take the Newton stratification of $\Spec(A)$ corresponding to the $F$-crystal with $G$-structure $(G,t^{-1}\sigma)$. 
Let $N^b$ be the closed Newton stratum, the unique one containing the closed point corresponding to the Newton point $\nu$. Note that the Kottwitz map is locally constant as in \cite[Proposition 1.21]{Zhu17}, so the $\sigma$-conjugacy class attached to each geometric point of $N^b$ is equal to $[b]$. 
The same argument as in \cite[Proposition 7.8]{HV11} works in this setting and gives the following lower bound of the dimension of $N^b$. 

\begin{prop}\label{prop:dimlwb}
$\dim N^b \geq \langle\rho,\mu+\nu\rangle-\deff(b)/2$. 
\end{prop}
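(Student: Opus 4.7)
The plan is to combine a dimension count for the ambient formal local ring $A$ with a purity-based codimension bound on $\cal{N}_\nu$, in direct analogy with \cite[Corollary 7.7]{HV11}.

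First, I would set up the dimension count. By \Cref{prop:finofgr} and the standard Schubert variety dimension formula, $\WGR_{\cal{G},\leq\mu^*}$ is the perfection of an irreducible projective variety of dimension $\langle 2\rho,\mu^*\rangle = \langle 2\rho,\mu\rangle$ (using $w_0\rho=-\rho$ and $\mu^*=-w_0\mu$). Applying \Cref{lem:invdimcomp} and \Cref{lem:invdimperf}, this local dimension is preserved under completion and perfection, so $\dim A = \langle 2\rho,\mu\rangle$. Next, I would identify the Newton polygon of $\cal{P}$ at the closed point: the construction in \Cref{lem:secfmnbh} arranges $t_{\overline{k}}=b'^{-1}$, so $\cal{P}|_{\overline{k}} = (\cal{G},b'\sigma) = \cal{P}_0$, and since $(\cal{P}_0)_\eta\cong\cal{E}^b_\eta$ via $\beta_0$, the Newton polygon at the closed point equals $\nu$.

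With purity of the Newton stratification on $\Spec(A_0)$ already established in the paragraph preceding the statement, I would then invoke the combinatorial argument of \cite[Corollary 7.7]{HV11}. That argument bounds the codimension of $\cal{N}_\nu$ in $\Spec(A)$ by $\langle\rho,\mu-\nu\rangle + \tfrac{1}{2}\deff(b)$, by tracing maximal chains in the Newton poset between the generic Newton polygon (bounded above by $\mu$ since $\cal{P}$ is bounded by $\mu$) and $\nu$. Subtracting from $\dim A$ yields
\[
\dim \cal{N}_\nu \geq \langle 2\rho,\mu\rangle - \langle\rho,\mu-\nu\rangle - \tfrac{1}{2}\deff(b) = \langle\rho,\mu+\nu\rangle - \tfrac{1}{2}\deff(b).
\]

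The main technical input is the purity machinery, which is already in place; the remaining work is the chain-length count in the Newton poset. The one subtlety to watch when transplanting \cite[Corollary 7.7]{HV11} to the mixed characteristic setting is that the generic Newton polygon on $\Spec(A)$ is large enough (reaches the $\mu$-ordinary polygon), so that the codimension bound is tight up to the defect correction $\tfrac{1}{2}\deff(b)$; this is automatic from the irreducibility of $\WGR_{\cal{G},\leq\mu^*}$ and the fact that the Hodge polygon of $\cal{P}$ equals $\mu$ on a dense open of $\Spec(A)$.
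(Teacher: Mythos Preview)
Your proposal is correct and follows essentially the same approach as the paper: compute $\dim A$ (equivalently $\dim A_0$) as $\langle 2\rho,\mu\rangle$ via the Schubert variety dimension formula, then invoke the purity-based codimension bound from \cite{HV11} and subtract. The paper's proof cites \cite[Proposition~7.8]{HV11} directly (which packages the chain-length count you unwind via \cite[Corollary~7.7]{HV11}), but the substance is identical.
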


Here, we rely on the purity of the Newton stratification over a perfect (thus non-Noetherian) ring $A$ that can be deduced from \cite[Main Theorem B]{Vas06} as in \cite[Theorem 7.4]{HV11}. We also use the fact that $A$ is the perfection of a complete local Noetherian ring so that its spectrum behaves like the spectrum of a complete local Noetherian ring.

\subsection{A local foliation as a correspondence}\label{ssec:compdim}

In this section, we will construct a local foliation relating the completion $X_{\leq \mu}(b)^\wedge_{[g]}$ of $X_{\leq \mu}(b)$ at $[g]$ to the closed Newton stratum $N^b$. Our local foliation is constructed as the following correspondence. 

\begin{center}
    \[
    \begin{tikzcd}
        &(\wtd{N}^b)^{\wedge}\arrow[ld]\arrow[rd]& \\
        X_{\leq \mu}(b)^\wedge_{[g]}\mathrel{\widehat{\times}} I_{x,n}^\wedge && N^b
    \end{tikzcd}
    \]
\end{center}
We will later introduce the relevant spaces $\wtd{N}^b$ and $I_{x,n}$, but let us stress here that this correspondence does not consist of finite surjective maps. 
We will see that $\wtd{N}^b$ is integral and perfectly of finite type over $N^b$, but it does not directly map to $X_{\leq \mu}(b)^\wedge_{[g]}\widehat{\times} I_{x,n}^\wedge$ due to its incompleteness. We cannot expect the map from $(\wtd{N}^b)^{\wedge}$ to the product to be integral, but instead we can show that it is adic. This is where the dimension theory developed in \Cref{ssec:dim} comes into our argument. 

We will follow the construction in the proof of \cite[Theorem 6.5]{HV12}. 
Let us write $N^b=\Spec(A^b)$. Here, $A^b$ is the perfection of a complete local Noetherian ring by construction. 
Fix a $P$-fundamental element $x\in \wtd{W}$ contained in $[b]$. 

\begin{prop}\label{prop:firstchoice}
    There is a perfect local adic $A^b$-algebra $\wtd{A}^b$ and a continuous section $h\in LG(\wtd{A}^b)$ such that 
    \begin{itemize}
        \item the homomorphism $A^b\to \wtd{A}^b$ is the perfection of a finite homomorphism between complete local Noetherian rings of the same dimension, and
        \item $h^{-1}t^{-1}\sigma(h)$ lies in $IxI$. 
    \end{itemize}
\end{prop}
\begin{proof}
    This is a mixed characteristic analogue of \cite[Theorem 4.14]{HV12} and the same proof can be applied in our context as well. It is worked out when $G$ is split, but it does not matter since we have \Cref{lem:IxIisclsd} in the general unramified case. 
\end{proof}

In the previous diagram, $\wtd{N}^b$ is the spectrum of $\wtd{A}^b$, so it is in a sense finite and surjective over $N^b$. The section $h$ will play the role of a quasi-isogeny between $(\cl{G},t^{-1}\sigma)$ and those classified by the space $I^\wedge_{x,n}$. However, to further this argument, we need to take the completion of $\wtd{A}^b$. 
Let $(\wtd{A}^b)^\wedge$ be the completion of the perfect adic ring $\wtd{A}^b$.

\begin{prop}\label{prop:secondchoice}
    There is a continuous section $h\in LG((\wtd{A}^b)^\wedge)$ such that $h^{-1}t^{-1}\sigma(h)$ lies in $xI_{\ov{N}}$. 
\end{prop}
\begin{proof}
    This is the content of \cite[Corollary 4.15]{HV12} in equal characteristic. Here, we record a mixed characteristic translation of the argument to show the necessity of taking the completion. Note that the treatment of a Levi factor is a bit simplified from \cite[Proposition 8.1]{HV11}. 

    First, we have $IxI/I\cong I/(I\cap xIx^{-1}) \cong I/(I\cap \phi_x(I)) \cong I_N/\phi_x(I_N)$. 
    Here, the second isomorphism is given by the Frobenius map on $I$. 
    Take $h\in LG(\wtd{A}^b)$ as in \Cref{prop:firstchoice}. 
    Since the quotient $I_N\to I_N/\phi_x(I_N)$ admits a scheme-theoretic section, 
    there exists a continuous section $n\in I_N(\wtd{A}^b)$ such that
    $[\sigma^{-1}(n)x]=[h^{-1}t^{-1}\sigma(h)]$ in $IxI/I$. 
    By replacing $h$ with $h\sigma^{-1}(n)$, we may assume that $h^{-1}t^{-1}\sigma(h)\in xI$. 

    Let $J$ be an ideal of definition of $\wtd{A}^b$ such that $h$ and $t$ are $J$-continuous. 
    We construct a sequence $\{h_i\}_{i\geq 0}$ of $J$-continuous $\wtd{A}^b$-valued points of $LG$ such that $h_i^{-1}t^{-1}\sigma(h_i)\in x\phi_x^i(I_N)I_MI_{\ov{N}}$. We set $h_0=h$. Suppose that $h_i^{-1}t^{-1}\sigma(h_i)=xnm\bar{n}$ with $n\in \phi_x^i(I_N)$, $m\in I_M$ and $\bar{n}\in I_{\ov{N}}$. If we set $h_{i+1}=h_ixnx^{-1}$, we have $h_{i+1}^{-1}t^{-1}\sigma(h_{i+1})=xm\bar{n}\phi_x(n)$. Since $m\bar{n}\phi_x(n) \in \phi_x^{i+1}(I)\cap I$, $h_{i+1}^{-1}t^{-1}\sigma(h_{i+1})$ lies in $x\phi_x^{i+1}(I_N)I_MI_{\ov{N}}$. If $h_i$ is $J$-continuous, then $n$ is $J$-continuous, so $h_{i+1}$ is also $J$-continuous. 

    By construction, $h_i^{-1}h_{i+1}$ lies in $x\phi_x^i(I_N)x^{-1}$. Since $\{\phi_x^i(I_N)\}_{i\geq 0}$ is convergent to $1$, we can take the limit $h'$ of the sequence $\{h_i\}_{i\geq 0}$ that is also $J$-continuous. By replacing $h$ with $h'$, we may assume that $h^{-1}t^{-1}\sigma(h)\in xI_MI_{\ov{N}}$. 

    Let us write $h^{-1}t^{-1}\sigma(h)=xm\bar{n}$. We show that $xm$ is $\sigma$-conjugate to $x$ by the translation of a continuous $(\wtd{A}^b)^\wedge$-valued point of $I_M$. By translating by a $\ov{k}$-valued point using  \Cref{lem:sigmaconjI}, we may assume that $m$ is trivial modulo $J$. Since $\phi_x^i(m)$ is $J^{q^i}$-continuous, the product $m\phi_x(m)\phi_x^2(m)\cdots$ converges to a $J$-continuous $(\wtd{A}^b)^\wedge$-valued point $m'$ of $I_M$. The $\sigma$-conjugation by $xm'x^{-1}$ translates $xm$ to $x$. 
\end{proof}

Take $h\in LG((\wtd{A}^b)^\wedge)$ as in \Cref{prop:secondchoice} and let $h^{-1}t^{-1}\sigma(h)=x\bar{n}$. By translating by a $\ov{k}$-valued point using \Cref{lem:sigmaconjII}, we may assume that the reduction of $\bar{n}$ is trivial. Since the class $[h^{-1}]\in \GR_\cl{G}$ is bounded, there is a positive integer $n$ such that for every $y\in \sigma^{-1}(\phi_x^{-n}(I_{\ov{N}}))$ and $k\in L^+\cl{G}$, $(hk)y(hk)^{-1}$ lies in $L^+\cl{G}$. 
By replacing $h$ with $h\sigma^{-1}(\bar{n}^{-1}\phi_x^{-1}(\bar{n}^{-1})\cdots\phi_x^{-n+1}(\bar{n}^{-1}))$, we may assume that $\bar{n}\in \phi_x^{-n}(I_{\ov{N}})$. Note that we can take an infinite product $h\sigma^{-1}(\bar{n}^{-1}\phi_x^{-1}(\bar{n}^{-1})\cdots)$ in $LG((\wtd{A}^b)^\wedge)$ but the product is not continuous any more. This is why we need the contribution of $I^\wedge_{x,n}$. 

Let $I_{x,n}$ be the quotient $\phi_x^{-n}(I_{\ov{N}})/\phi_x^{-n-1}(I_{\ov{N}})$, which is isomorphic to an affine space of dimension $\langle 2\rho, \nu \rangle$. 
This is known in the literature such as \cite{GHKR10} and \cite{HV12} in the split case, and a similar proof can be applied by modifying the $x$-conjugation by $\phi_x$. 
Let $I^\wedge_{x,n}$ be the completion of $I_{x,n}$ at the identity and fix a section of $\phi_x^{-n}(I_\ov{N})\to I_{x,n}$. It gives a continuous section of $I_{x,n}^\wedge$ on $\phi_x^{-n}(I_\ov{N})$. 

\begin{prop}\label{prop:thirdchoice}
There is a continuous section $\bar{n}' \in \phi_x^{-n}(I_{\ov{N}})((\wtd{A}^b)^\wedge)$ reducing to the identity such that $\phi_x^{-1}(\bar{n}')\bar{n}\bar{n}'^{-1}\in I^\wedge_{x,n}$. 
\end{prop}
\begin{proof}
    This is the content of \cite[Theorem 6.5, Step 6]{HV12} in equal characteristic. Here, we explain the argument to indicate that it works well with continuity. 

    Let $J$ be an ideal of definition of $(\wtd{A}^b)^\wedge$ such that $\bar{n}$ is trivial modulo $J$. We construct two sequences $\{\bar{n}_i\}_{i\geq 0}$ in $I^\wedge_{x,n}((\wtd{A}^b)^\wedge)$ and $\{\bar{n}'_i\}_{i\geq 0}$ in $\phi_x^{-n}(I_{\ov{N}})((\wtd{A}^b)^\wedge)$ all of which are continuous and reduce to the identity such that the element $\bar{n}''_i=\phi_x^{-1}(\bar{n}'_i)\bar{n}\bar{n}'^{-1}_i$ is congruent to $\bar{n}_i$ modulo $J^{q^i}$. We set $\bar{n}_0=\bar{n}'_0=1$. Suppose that we have $\bar{n}_i$ and $\bar{n}'_i$. The image of $\bar{n}''_i$ under the quotient map $\phi_x^{-n}(I_{\ov{N}})\to I_{x,n}$ lies in $I_{x,n}^\wedge$, so we may set $\bar{n}_{i+1}$ as the image of $\bar{n}''_i$. Then, $\bar{n}_{i+1}\bar{n}''^{-1}_i$ lies in $\phi_x^{-n-1}(I_{\ov{N}})$, so we may set $\bar{n}'_{i+1}$ as $\phi_x(\bar{n}_{i+1}\bar{n}''^{-1}_i)\bar{n}'_i$, and we have $\bar{n}''_{i+1}=\bar{n}_{i+1}\phi_x(\bar{n}_{i+1}\bar{n}''^{-1}_i)^{-1}$. Since $\bar{n}''_i$ is congruent to $\bar{n}_i$ modulo $J^{q^i}$, $\bar{n}_{i+1}$ is congruent to $\bar{n}_i$ modulo $J^{q^i}$, and thus congruent to $\bar{n}''_i$ modulo $J^{q^i}$. This implies that $\bar{n}''_{i+1}$ is congruent to $\bar{n}_{i+1}$ modulo $J^{q^{i+1}}$. By construction, the sequence $\{\bar{n}'_i\}_{i\geq 0}$ converges to a $J$-continuous section $\bar{n}' \in \phi_x^{-n}(I_{\ov{N}})((\wtd{A}^b)^\wedge)$. It is a desired element since $\phi_x^{-1}(\bar{n}')\bar{n}(\bar{n}')^{-1}$ is equal to the limit of $\{\bar{n}_i\}_{i\geq 0}$. 
\end{proof}

Take $\bar{n}'$ as in \Cref{prop:thirdchoice}. By replacing $h$ with $h\sigma^{-1}(\bar{n}'^{-1})$, we may assume that $\bar{n}\in I_{x,n}^\wedge$. If we set $(\wtd{N}^b)^\wedge = \Spf(\wtd{A}^b)^\wedge$, the element $\bar{n}$ defines a morphism $(\wtd{N}^b)^\wedge\to I^\wedge_{x,n}$.
Let $g'$ be an element of $G(L)$ such that the reduction of $h$ is equal to $g^{-1}g'$. Thanks to the consideration made in the definition of a positive integer $n$, it follows that the section $[g'h^{-1}]\in \GR_\cl{G}((\wtd{A}^b)^\wedge)$ defines a morphism $(\wtd{N}^b)^\wedge \to X_{\leq \mu}(b)^\wedge_{[g]}$. 
This is the content of \cite[Theorem 6.5, Step 5]{HV12} in equal characteristic. 

Now, we have completed the construction of a correspondence describing a local foliation. The key property of this correspondence is the following adicness. 

\begin{thm}\label{thm:constmap}
The product map $(\wtd{N}^b)^\wedge \to X_{\leq \mu}(b)^{\wedge}_{[g]} \mathrel{\widehat{\times}}I^\wedge_{x,n}$ is adic.
\end{thm}
\begin{proof}
    Let $(\wtd{N}^b)^\wedge_0$ be the fiber of the closed point of $X_{\leq \mu}(b)^{\wedge}_{[g]} \mathrel{\widehat{\times}}I^\wedge_{x,n}$ in $(\wtd{N}^b)^\wedge$ as defined right after \Cref{lem:adiccrit}. By \Cref{lem:adiccrit}, it is enough to show that $(\wtd{N}^b)^\wedge_0$ is the closed point of $(\wtd{N}^b)^\wedge$. 

    Let us write $(\wtd{N}^b)^\wedge_0=\Spf(A')$. We see that $[g'h^{-1}_{A'}]=[g]$ and $\bar{n}_{A'}=1$. In particular, if we set $h'=h_{A'}g'^{-1}g$, we have $h'\in L^+\cl{G}(A')$ and $\sigma(h')^{-1}t_{A'}h'=b'^{-1}$. Let $J$ be the continuity of $h'$. Since we have $[\sigma(h')^{-1}t_{A'}]=[b'^{-1}]$, $[t_{A'}]$ is $J^q$-continuous. By the construction using \Cref{thm:constdef}, $t_{A'}$ is also $J^q$-continuous, so $h'=t_{A'}^{-1}\sigma(h')b'^{-1}$ is also $J^q$-continuous. However, since $J$ is the continuity of $h'$, we get $J=J^q$. Since $J$ is topologically nilpotent and $A'$ is separated, it follows that $J=0$. In particular, $h=1$ and $t_{A'}=b'^{-1}$. Then, the composition of the natural maps $(\wtd{N}^b)^\wedge_0\to N^b\to \Spec(A)\to \GR_\cl{G}$ factors through the closed point $[b'^{-1}]$. These maps are all adic in a sense, so $(\wtd{N}^b)^\wedge_0$ is a closed point. 
\end{proof}

This property gives a lower bound of the dimension of $X_{\leq \mu}(b)^{\wedge}_{[g]}$ by \Cref{lem:adicdim} and the bound turns out to be equal to the dimension of $X_{\leq \mu}(b)$. 

\begin{cor}\label{cor:dim}
$\dim X_{\leq \mu}(b)^{\wedge}_{[g]}=\langle\rho,\mu-\nu\rangle-\deff(b)/2$. 
\end{cor}
\begin{proof}
    Thanks to \Cref{lem:adicdim}, \Cref{thm:constmap} implies that $\dim (\wtd{N}^b)^\wedge \leq \dim X_{\leq \mu}(b)^{\wedge}_{[g]}+\dim I_{x,n}^\wedge$. Since dimensions are preserved under perfection and completion as in \Cref{lem:invdimperf} and \Cref{lem:invdimcomp}, we have $\dim (\wtd{N}^b)^\wedge = \dim \wtd{N}^b$. By construction in \Cref{prop:firstchoice} and the estimate in \Cref{prop:dimlwb}, we have $\dim \wtd{N}^b=\dim N^b \geq \langle\rho,\mu+\nu\rangle-\deff(b)/2$. Since $I_{x,n}$ is an affine space of dimension $\langle 2\rho, \nu \rangle$, we obtain the inequality $\dim X_{\leq \mu}(b)^{\wedge}_{[g]}\geq \langle\rho,\mu-\nu\rangle-\deff(b)/2$. The dimension formula implies that the right-hand side is equal to the dimension of $X_{\leq \mu}(b)$, thus we obtain the claim. 
\end{proof}

Now, we arrive at the mixed characteristic counterpart of \cite[Corollary 6.8 (a)]{HV12}. 

\begin{thm}\label{thm:mainthm}
The closed affine Deligne-Lusztig variety $X_{\leq \mu}(b)$ is equidimensional, and the affine Deligne-Lusztig variety $X_\mu(b)$ is a dense open subscheme of $X_{\leq \mu}(b)$. 
In particular, the affine Deligne-Lusztig variety $X_\mu(b)$ is equidimensional. 
\end{thm}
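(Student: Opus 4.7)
The plan is to combine the local dimension lower bound from \Cref{thm:constofmap} with the Hamacher--Zhu dimension formula, and to conclude equidimensionality by a standard argument about closed points on irreducible components of a scheme locally perfectly of finite type.

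First, I would fix an arbitrary closed point $g \in X_{\leq \mu}(b)(\overline{k})$ and run the construction of the preceding subsection at $g$: form $b' = g^{-1} b \sigma(g)$, the formal local ring $A$ of $\WGR_{\cal{G}, \leq \mu^*}$ at $[b'^{-1}]$, and the closed Newton stratum $\cal{N}_\nu \subset \Spec(A)$. Since $b$ is basic, the relevant Newton polygon at every $\overline{k}$-point of $X_{\leq\mu}(b)$ is the basic one $\nu$, so \Cref{prop:dimlwb} together with \Cref{thm:constofmap} yields
\[\dim \Tub{X_{\leq \mu}(b)}{g} \geq \dim \cal{N}_\nu \geq \langle \rho, \mu + \nu \rangle - \deff(b)/2.\]
On the other hand, the Hamacher--Zhu dimension formula gives $\dim X_{\mu'}(b) = \langle \rho, \mu' + \nu \rangle - \deff(b)/2$ whenever $X_{\mu'}(b)$ is non-empty, and since $\langle \rho, \mu' \rangle < \langle \rho, \mu \rangle$ for every $\mu' \prec \mu$, this identifies $\dim X_{\leq \mu}(b)$ with $\langle \rho, \mu + \nu \rangle - \deff(b)/2$. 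Hence $\dim \Tub{X_{\leq \mu}(b)}{g} \geq \dim X_{\leq \mu}(b)$ at every closed point $g$.

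From this I deduce equidimensionality: because $X_{\leq \mu}(b)$ is locally perfectly of finite type over $\overline{k}$, every irreducible component contains a closed point not lying on any other component, at which the dimension of the formal local ring equals the dimension of that unique component. Combined with the previous inequality, no component can have dimension strictly less than $\dim X_{\leq \mu}(b)$, so all components are top-dimensional.

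For density of $X_\mu(b)$, its complement in $X_{\leq \mu}(b)$ is the finite closed union of the $X_{\leq \mu'}(b)$ over maximal $\mu' \prec \mu$, of dimension at most $\max_{\mu' \prec \mu} \langle \rho, \mu' + \nu \rangle - \deff(b)/2 < \dim X_{\leq \mu}(b)$. By the equidimensionality just proved, this closed subset contains no irreducible component, so $X_\mu(b)$ meets every component and is dense. The main obstacle in the proof itself is concentrated in the preceding theorems; here, the only delicate point is the passage from the pointwise inequality on formal local ring dimensions to global equidimensionality, which rests on the fact that each irreducible component of a scheme locally perfectly of finite type over a field admits a closed point disjoint from the other components.
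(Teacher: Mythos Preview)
Your argument is correct and follows essentially the same route as the paper: combine the local lower bound from \Cref{prop:dimlwb} and \Cref{thm:constofmap} with the global dimension formula, use $\langle\rho,\nu\rangle=0$ in the basic case to match the two, and deduce density of $X_\mu(b)$ from the strictly smaller dimension of its complement. One cosmetic slip: the Hamacher--Zhu formula is $\dim X_{\mu'}(b)=\langle\rho,\mu'-\nu\rangle-\tfrac{1}{2}\deff(b)$, not $\mu'+\nu$; this is harmless here precisely because $\langle\rho,\nu\rangle=0$ when $b$ is basic.
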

\begin{proof}
The dimension of $X_{\leq \mu}(b)$ at each closed point is constant, so $X_{\leq \mu}(b)$ is equidimensional. Since the complement of $X_\mu(b)$ has smaller dimension than $X_{\leq \mu}(b)$, $X_\mu(b)$ is dense in $X_{\leq \mu}(b)$. 
\end{proof}

\begin{rmk}
    There is a possibility of extending the above result to the case where $G$ is ramified but quasi-split over $F$. In that case, we take a parahoric group scheme $\cl{G}$ so that $\cl{G}(O_L)$ is a special maximal parahoric subgroup of $G(L)$ (see \cite[Lemma 6.1]{Zhu15}). Then, the dimension formula of $X_{\leq \mu}(b)$ is available in \cite[Theorem 2.29]{He16}. It is interesting to see if the same argument works by replacing $P$-fundamental elements with $\sigma$-straight elements (cf.\ \cite[Theorem 1.3]{Nie15}). 
\end{rmk}

\bibliography{reference}
\bibliographystyle{alpha}
\end{document}